\newtheorem{thm}{Theorem}[section]
\newtheorem{prop}[thm]{Proposition}
\newtheorem{lem}[thm]{Lemma}
\theoremstyle{definition}
\newtheorem*{rem}{Remark}
\newtheorem{ex}[thm]{Example}
\newcommand{\C}{\mathbb{C}}
\newcommand{\Z}{\mathbb{Z}}
\newcommand{\F}{\mathbb{F}}
\newcommand{\M}{\mathrm{M}}
\newcommand{\GL}{\mathrm{GL}}
\newcommand{\Irr}{\operatorname{Irr}}
\newcommand{\Ker}{\operatorname{Ker}}
\newcommand{\ol}{\overline}
\newcommand{\bs}{\boldsymbol}
\newcommand{\la}{\lambda}
\begin{document}
\title{Construction of rank $4$ self-dual association schemes inducing
  three partial geometric designs}
\author{Akihide Hanaki\footnote{Supported by JSPS KAKENHI Grant Number JP22K03266}}
\date{\small Faculty of Science, Shinshu University,
  Matsumoto, 390-8621, Japan\\
  {\tt hanaki@shinshu-u.ac.jp}
}
\maketitle

\begin{abstract}
  B. Xu characterized rank $4$ self-dual association schemes inducing
  three partial geometric designs by their character tables.
  We construct such association schemes as Schur rings over abelian $2$-groups.
  \medskip

  {\it Key Words} : partial geometric designs; association schemes;
  self-dual; character tables; Schur rings
\end{abstract}

\section{Introduction}
A block design is $(\mathfrak{P},\mathfrak{B},I)$,
where $\mathfrak{P}$ and $\mathfrak{B}$ are (finite) sets
and $I\subset \mathfrak{P}\times \mathfrak{B}$.
R. C. Bose, et al. defined partial geometric designs
in \cite{MR0505661}, and A. Neumaier called them $1\frac{1}{2}$-designs
in \cite{MR570206}.
A rank $(d+1)$ association scheme is $(X, \{R_i\}_{i=0,\dots,d})$,
where $X$ is a finite set, $R_i$ ($i=0,\dots,d$) are subsets of $X\times X$
with some properties (see Subsection \ref{secBA}).
Hence an association scheme contains many block designs for
$\mathfrak{P}=\mathfrak{B}=X$, $I=R_i$ (or a union of $R_i$'s).
In \cite{MR3570803,MR4522422,Xu2023},
partial geometric designs in association schemes were investigated.

In \cite{Xu2023}, Bangteng Xu characterized rank $4$
self-dual association schemes $(X, \{R_i\}_{i=0,\dots,3})$ such that
$R_1$, $R_2$, and $R_0\cup R_3$ induce partial geometric designs
by their character tables.
There are three cases : (1) primitive, (2) imprimitive symmetric,
and (3) imprimitive non-symmetric.
For the case (1), he gave infinitely many such examples
\cite[Theorem 9]{MR3570803}, \cite[Example 1.3]{Xu2023}.
but only one example for (2) and (3), respectively.
In this article, we will construct infinitely many examples for (2)
in Theorem \ref{thm1} by Schur rings over ${\Z_2}^{2n}$,
and (2) and (3) in Theorem \ref{thm2}
by Schur rings over ${\Z_4}^n$.

Xu's examples for (1) are of $3$-power order,
and our examples for (2) and (3) are of $2$-power order.
The author does not know whether there exists such an association scheme
whose order is not a prime power.

\section{Preliminaries}
For a positive integer $m$, we set $\Z_m=\Z/m\Z$.
A finite field of order $q$ will be denoted by $\F_q$. 

\subsection{Association schemes}\label{secBA}
Let $X$ be a finite set, and $R_i$ ($i=0,\dots,d$) nonempty subsets of $X\times X$.
We say that $\mathfrak{X}=(X, \{R_i\}_{i=0,\dots,d})$ is a rank $(d+1)$
(or $d$-class) \emph{association scheme} if
(1) $X\times X=\bigcup_{i=0,\dots,d}R_i$ and $R_i\cap R_j=\emptyset$ if $i\ne j$,
(2) $R_0=\{(x,x)\mid x\in X\}$,
(3) for every $i\in\{0,\dots,d\}$, there exists $i^*\in\{0,\dots,d\}$ such that
$R_{i^*}=\{(y,x)\mid (x,y)\in R_i\}$, and
(4) for $i, j, k\in \{0,\dots,d\}$, there exists an integer $p_{ij}^k$ such that
$p_{ij}^k=\sharp\{z\in X\mid (x,z)\in R_i,\ (z,y)\in R_j\}$ when $(x,y)\in R_k$.
An association scheme $\mathfrak{X}$ is said to be \emph{symmetric}
if $i^*=i$ for all $i\in\{0,\dots,d\}$,
\emph{commutative} if $p_{ij}^k=p_{ji}^k$ for all $i, j, k\in \{0,\dots,d\}$.
We set $n_i=p_{ii^*}^0$ and call this number the \emph{valency} of $R_i$.
It is known that $\mathfrak{X}$ is commutative if $d\leq 4$.
In this article, we only consider commutative association schemes.

We set $X=\{x_1,\dots,x_n\}$.
For $R_k$, we define the \emph{adjacency matrix} $A_k\in \M_n(\C)$ by
$(A_k)_{ij}=1$ if $(x_i,x_j)\in R_k$ and $0$ otherwise.
By the condition (4), $A_iA_j=\sum_{k=0}^d p_{ij}^k A_k$.
The $\C$-vector space $\C\mathfrak{X}=\bigoplus_{i=0}^d \C A_i$ becomes a
$\C$-subalgebra of the full matrix algebra $\M_n(\C)$.
By the condition (3), $\C\mathfrak{X}$ is a semisimple algebra.
Thus, for a commutative association scheme $\mathfrak{X}$,
$$\C\mathfrak{X}\cong \C\oplus\dots\oplus\C$$
as $\C$-algebras.
Every projection is an irreducible representation, and also an irreducible character
because the degree is one.
We denote by $\Irr(\mathfrak{X})$ the set of all irreducible characters.
The map $A_i\mapsto n_i$ is an irreducible character,
called the \emph{trivial character}.
The representation $A_i\mapsto A_i$ is called the \emph{standard representation},
and its character is called the \emph{standard character} and denoted by $\rho$.
It is clear that $\rho(A_0)=|X|$, $\rho(A_i)=0$ for $i\ne 0$.
Consider the irreducible decomposition
$\rho=\sum_{\chi\in\Irr(\mathfrak{X})}m_\chi \chi$, and we call $m_\chi$ the
\emph{multiplicity} of $\chi$.
In this article, we set $\Irr(\mathfrak{X})=\{\chi_0,\dots,\chi_d\}$ where
$\chi_0$ is the trivial character, and $m_i=m_{\chi_i}$.
The $(d+1)\times (d+1)$ matrix $(\chi_i(A_j))$ is called the \emph{character table}
(or the \emph{first eigenmatrix}) of $\mathfrak{X}$.

Two association schemes $\mathfrak{X}=(X, \{R_i\}_{i=0,\dots,d})$ and
$\mathfrak{X}'=(X', \{R_i'\}_{i=0,\dots,d'})$ are \emph{isomorphic}
if there is a bijection $\sigma:X\to X'$ such that
$\{R_i^\sigma\}_{i=0,\dots,d}=\{R_i'\}_{i=0,\dots,d'}$,
where $R_i^\sigma=\{(x^\sigma,y^\sigma)\mid (x,y)\in R_i\}$,
\emph{algebraically isomorphic}
if there is a bijection $\tau:\{0,\dots,d\}\to\{0,\dots,d'\}$ such that
$p_{i^\tau j^\tau}^{k^\tau}=p_{ij}^k$ for all $0\leq i, j, k\leq d$.
Obviously, if $\mathfrak{X}$ and $\mathfrak{X}'$ are isomorphic, then
they are algebraically isomorphic, but the converse is not true, in general.
Two commutative association schemes are algebraically isomorphic if and only if
their character tables are the same
by suitable permutations of their rows and columns.

Two orthogonality relations are known \cite[Theorem II.3.5]{BI}.

\begin{thm}
  Let $\mathfrak{X}=(X, \{R_i\}_{i=0,\dots,d})$ be a  commutative association scheme.
  \begin{enumerate}[{\rm (1)}]
    \item (The first orthogonality relation).
    $$\frac{m_i}{|X|}\sum_{j=0}^d \frac{1}{n_j}\chi_i(A_j)\ol{\chi_{i'}(A_j)}
    =\delta_{ii'},$$
    where $\ol{\chi_{i'}(A_j)}$ is the complex conjugate and $\delta_{ii'},$
    is the Kronecker's delta.
    \item (The second orthogonality relation).
    $$\frac{1}{n_j|X|}\sum_{i=0}^d m_i\chi_i(A_j)\ol{\chi_i(A_{j'})}=\delta_{jj'}.$$
  \end{enumerate}
\end{thm}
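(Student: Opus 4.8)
\emph{Proof sketch.}
The plan is to obtain both relations as statements of orthogonality of two natural bases of the Bose--Mesner algebra $\C\mathfrak{X}$ against a single Hermitian inner product, read off through the character table. Equip $\M_n(\C)$ with $\langle A,B\rangle=\frac{1}{|X|}\operatorname{tr}(AB^*)$, where $B^*$ is the conjugate transpose. First I would record the structural input: since each $A_i$ has real $0/1$ entries and condition (3) identifies the reflection of $R_i$ with $R_{i^*}$, we have $A_i^*=A_{i^*}$, so the basis $\{A_i\}$ is permuted by $*$ and $\C\mathfrak{X}$ is a $*$-subalgebra of $\M_n(\C)$. Consequently the primitive idempotents $E_0,\dots,E_d$ of $\C\mathfrak{X}$ (the elements corresponding to the standard basis vectors under $\C\mathfrak{X}\cong\C\oplus\cdots\oplus\C$) are self-adjoint, $E_i^*=E_i$.

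Next I would compute the two Gram matrices. For the adjacency basis, the $(x,x)$-entry of $A_iA_j^*=A_iA_{j^*}$ counts the $z$ with $(x,z)\in R_i$ and $(x,z)\in R_j$, which is $n_i$ when $i=j$ and $0$ otherwise; summing over $x\in X$ gives $\langle A_i,A_j\rangle=\delta_{ij}n_i$. For the idempotent basis, $E_iE_j^*=E_iE_j=\delta_{ij}E_i$, and $\operatorname{tr}(E_i)=m_i$ since $E_i$ is the idempotent projecting onto the $\chi_i$-isotypic component of the standard representation; hence $\langle E_i,E_j\rangle=\delta_{ij}\,m_i/|X|$.

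The two bases are linked by the character table. By the definition of $\chi_i$ we have $A_jE_i=\chi_i(A_j)E_i$, so $A_j=\sum_{i}\chi_i(A_j)E_i$; note also $\chi_i(A_{j^*})=\chi_i(A_j^*)=\ol{\chi_i(A_j)}$. Expanding the self-adjoint $E_i$ in the orthogonal basis $\{A_j\}$ and evaluating $\langle E_i,A_j\rangle=\frac{1}{|X|}\operatorname{tr}(E_iA_{j^*})=\frac{m_i}{|X|}\ol{\chi_i(A_j)}$ yields the inverse change of basis $E_i=\frac{m_i}{|X|}\sum_{j}\frac{1}{n_j}\ol{\chi_i(A_j)}A_j$. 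Now I would substitute $A_j=\sum_i\chi_i(A_j)E_i$ into $\langle A_j,A_{j'}\rangle=\delta_{jj'}n_j$ and apply $\langle E_i,E_{i'}\rangle=\delta_{ii'}m_i/|X|$ to obtain the second relation; dually, substituting the expansion of $E_i$ into $\langle E_i,E_{i'}\rangle=\delta_{ii'}m_i/|X|$ and applying $\langle A_j,A_{j'}\rangle=\delta_{jj'}n_j$ gives the first, after relabeling $i\leftrightarrow i'$ and dividing by the scalar $m_i/|X|$.

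I expect the only genuinely nontrivial point to be the self-adjointness $E_i^*=E_i$ of the primitive idempotents; the remainder is bookkeeping with traces and a single orthogonal change of basis. This self-adjointness is exactly what makes the two orthogonalities complex-conjugate duals of one another, and it rests entirely on the $*$-closure of $\{A_i\}$ supplied by condition (3). Once it is in place, relations (1) and (2) are simply the diagonality of the Gram matrices of $\{E_i\}$ and of $\{A_j\}$, expressed through the matrix $(\chi_i(A_j))$ and its conjugate inverse.
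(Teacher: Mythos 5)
Your argument is correct and takes essentially the same route as the source the paper relies on: the paper does not prove this theorem itself but quotes it from \cite[Theorem II.3.5]{BI}, where the standard proof is exactly your scheme --- the normalized trace inner product on the Bose--Mesner algebra, self-adjointness of the primitive idempotents, $\operatorname{tr}(E_i)=m_i$, the Gram computations $\langle A_i,A_j\rangle=\delta_{ij}n_i$ and $\langle E_i,E_j\rangle=\delta_{ij}m_i/|X|$, and the change of basis through the character table. The one point you flag but leave unproved, $E_i^{*}=E_i$, closes in a line: $E_i^{*}$ is again a primitive idempotent of the commutative $*$-algebra $\C\mathfrak{X}$, and if $E_i^{*}=E_j$ with $j\ne i$ then $E_iE_i^{*}=E_iE_j=0$, contradicting $\operatorname{tr}(E_iE_i^{*})=\sum_{x,y}|(E_i)_{xy}|^{2}>0$.
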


Let $\mathfrak{X}=(X, \{R_i\}_{i=0,\dots,d})$ be a  commutative association scheme.
A subset $C$ of $\{R_i\}_{i=0,\dots,d}$ is called a \emph{closed subset}
of $\mathfrak{X}$ if $\bigoplus_{R_i\in C}\C A_i$ is a subalgebra of $\C\mathfrak{X}$.
An association scheme is said to be \emph{primitive} if there is no non-trivial
closed subset, and \emph{imprimitive} otherwise.
For $\chi\in\Irr(\mathfrak{X})$, define the \emph{kernel} of $\chi$ by
$\Ker\chi=\{A_i\mid \chi(A_i)=n_i\}$.
Then $\Ker\chi$ is a closed subset of $\mathfrak{X}$.
The next lemma holds.

\begin{lem}\label{lemclosed}
  Let $\mathfrak{X}=(X, \{R_i\}_{i=0,\dots,d})$ be a commutative association scheme
  with a closed subset $C$, and let $\chi\in\Irr(\mathfrak{X})$.
  If $C\not\subset \Ker\chi$, then $\sum_{A_i\in C}\chi(A_i)=0$.
\end{lem}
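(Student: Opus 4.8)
The plan is to study the single matrix $B=\sum_{A_i\in C}A_i$, which is the $\{0,1\}$-adjacency matrix of the relation $T=\bigcup_{R_i\in C}R_i$, and to reduce the lemma to the dichotomy
$\sum_{A_i\in C}\chi(A_i)=\chi(B)\in\{0,n_C\}$, where $n_C=\sum_{A_i\in C}n_i$. Since $\chi$ is an irreducible character of a commutative scheme, it is an algebra homomorphism $\C\mathfrak{X}\to\C$, so everything reduces to understanding $B$ as an element of $\C\mathfrak{X}$ and evaluating the one-dimensional representation $\chi$ on it.

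First I would show that $T$ is an equivalence relation on $X$. Because $C$ is a closed subset we have $A_0\in C$, so $T$ is reflexive. Transitivity comes from closure under multiplication: if $(x,z)\in R_i$, $(z,y)\in R_j$ with $R_i,R_j\in C$ and $(x,y)\in R_k$, then $p_{ij}^k\neq 0$, so $A_k$ occurs with nonzero coefficient in $A_iA_j\in\bigoplus_{A_\ell\in C}\C A_\ell$, whence $R_k\in C$ and $(x,y)\in T$. Symmetry follows from $A_i\in C\Rightarrow A_{i^*}\in C$, which holds because in a commutative scheme $A_i$ is normal ($A_iA_{i^*}=A_{i^*}A_i$) and its conjugate transpose $A_{i^*}=A_i^{*}$ is therefore a polynomial in $A_i$, hence lies in $\bigoplus_{A_\ell\in C}\C A_\ell$. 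Since the out-degrees $n_i$ are constant and the $R_i$ are disjoint, every $T$-class has exactly $\sum_{A_i\in C}n_i=n_C$ elements; after reordering $X$, the matrix $B$ is block diagonal with all-ones blocks of size $n_C$, so $B^2=n_C B$.

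Applying the algebra homomorphism $\chi$ to $B^2=n_C B$ gives $\chi(B)^2=n_C\chi(B)$, so $\chi(B)\in\{0,n_C\}$. It remains to prove that the value $n_C$ is attained only when $C\subset\Ker\chi$, which is the contrapositive of the lemma. Here I would use the bound $|\chi(A_i)|\le n_i$, valid because $\chi(A_i)$ is an eigenvalue of the nonnegative matrix $A_i$ all of whose row sums equal $n_i$ (Perron--Frobenius). If $\chi(B)=n_C$, then comparing real parts yields
$n_C=\sum_{A_i\in C}\operatorname{Re}\chi(A_i)\le\sum_{A_i\in C}n_i=n_C$,
forcing $\operatorname{Re}\chi(A_i)=n_i$ for every $A_i\in C$; together with $|\chi(A_i)|\le n_i$ this gives $\chi(A_i)=n_i$, i.e. $A_i\in\Ker\chi$. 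Hence $C\not\subset\Ker\chi$ excludes $\chi(B)=n_C$, leaving $\chi(B)=0$.

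I expect the main obstacle to be the structural step $B^2=n_C B$: verifying cleanly that a closed subset yields an honest equivalence relation with uniform block sizes, where the symmetry of $T$ and the equality of class sizes are exactly the points that require the association-scheme axioms rather than pure linear algebra. The concluding eigenvalue argument is short once $|\chi(A_i)|\le n_i$ is available; its only delicate feature is the equality analysis pinning each $\chi(A_i)$ to $n_i$, for which passing to real parts is the cleanest device. (Conceptually, this lemma is the relativized form of the familiar fact that for the full scheme $\sum_i\chi(A_i)=\chi(J)=0$ for every nontrivial $\chi$, with $B$ playing the role of the all-ones matrix $J$ inside the subalgebra $\bigoplus_{A_i\in C}\C A_i$.)
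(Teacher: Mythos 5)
The paper does not actually prove this lemma: it is stated as a known fact (``The next lemma holds''), so there is no in-paper argument to compare against. Your proof is correct and is essentially the standard one from the literature: a closed subset yields an equivalence relation whose union matrix $B$ satisfies $B^2=n_C B$, the degree-one character $\chi$ turns this into $\chi(B)\in\{0,n_C\}$, and the eigenvalue bound $|\chi(A_i)|\le n_i$ together with the equality analysis shows $\chi(B)=n_C$ forces $C\subset\Ker\chi$ --- exactly the relativized version of $\chi(J)=0$ that you describe. One small soft spot: you assert $A_0\in C$ as part of closedness, but the paper's definition only demands that $\bigoplus_{R_i\in C}\C A_i$ be a subalgebra, so if ``subalgebra'' is not read as unital this needs a one-line argument (e.g., the digraph of any $A_i$ with $i\in C$ has constant out-degree $n_i\ge 1$, hence a directed cycle, so $\operatorname{tr}(A_i^m)>0$ for some $m$ and $A_0$ appears in $A_i^m$; the same expansion then yields $A_{i^*}\in C$, which would also let you bypass the normal-matrix polynomial trick, where your interpolating polynomial $q$ may a priori have a nonzero constant term and thus itself quietly uses $A_0\in C$ when $A_i$ is invertible). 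With that one-liner supplied, every step --- the uniform class size $n_C$, the idempotency argument, and the real-part equality analysis pinning $\chi(A_i)=n_i$ --- is sound.
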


A commutative association scheme $\mathfrak{X}=(X, \{R_i\}_{i=0,\dots,d})$
is said to be \emph{self-dual} if
$$\frac{\chi_i(A_j)}{n_j}=\frac{\chi_j(A_i)}{n_i}$$
hold for all $0\leq i, j\leq d$, by a suitable reordering of indices
\cite{Bannai1993}.

\subsection{Schur rings}\label{secBB}
Let $G$ be a finite group, and $\mathfrak{S}=\{S_0,\dots,S_d\}$ a partition of $G$.
Namely, $S_i$ ($i=0,\dots,d$) are non-empty subsets of $G$,
$G=S_0\cup\dots\cup S_d$, and $S_i\cap S_j=\emptyset$ if $i\ne j$.
For $S_i$, we define $\underline{S_i}=\sum_{g\in S_i}g\in \C G$
and $\C\mathfrak{S}=\bigoplus_{i=0}^d \C\underline{S_i}\subset \C G$.
We say that $\C\mathfrak{S}$ is a \emph{Schur ring} over $G$ if
(1) $S_0=\{1_G\}$, (2) for every $i\in\{0,\dots,d\}$, there exists
$i^*\in\{0,\dots,d\}$ such that $S_{i^*}=\{g^{-1}\mid g\in S_i\}$, and 
(3) $\C\mathfrak{S}$ is a subring of $\C G$ \cite{BI, MR0183775}.

\begin{ex}
  Let $G$ be a finite group, $H$ a subgroup of the automorphism group of $G$.
  Let $\mathfrak{S}=\{S_0,\dots,S_d\}$ be the set of $H$-orbits on $G$.
  Then $\C\mathfrak{S}$ is a Schur ring over $G$.
\end{ex}

A Schur ring defines an association scheme in a natural way.
Let $T:G\to \GL_n(\C)$ be the regular permutation representation of $G$,
where $n=|G|$.
Set $A_k=\sum_{g\in S_k}T(g)\in \M_n(\C)$.
In other words,
set $G=\{g_1,\dots,g_n\}$ and define $A_k\in \M_n(\C)$ by
$(A_k)_{ij}=1$ if $g_ig_j^{-1}\in S_k$ and $0$ otherwise.
Then $\{A_0,\dots,A_d\}$ becomes the set of adjacency matrices of an association scheme.

\subsection{Partial geometric designs, $1\frac{1}{2}$-designs}\label{secBC}
A block design is $(\mathfrak{P},\mathfrak{B},I)$,
where $\mathfrak{P}$ and $\mathfrak{B}$ are (finite) sets
and $I\subset \mathfrak{P}\times \mathfrak{B}$.

For a positive integer $t$,
a block design $(\mathfrak{P},\mathfrak{B},I)$ is a \emph{$t$-design}
if there is an integer $\lambda$ such that,
for any $t$-subset $S$ of $\mathfrak{P}$,
$\sharp\{B\in \mathfrak{B}\mid \text{$(p,B)\in I$ for all $p\in S$}\}=\lambda$.
A $1$-design $(\mathfrak{P},\mathfrak{B},I)$ is a \emph{partial geometric design}
or \emph{$1\frac{1}{2}$-design} if
there are integers $\alpha$, $\beta$ such that
$\sharp\{(y,C)\mid (x,C)\in I,\ (y,B)\in I\}=\alpha$ if $(x,B)\not\in I$
and $\beta$ if $(x,B)\in I$.
It is easy to see that $2$-designs are partial geometric designs.

Let $\mathfrak{X}=(X,\{R_i\}_{i=0,\dots,d})$ be an association scheme.
Since $R_i$ (or a union of them) is a subset of $X\times X$,
$R_i$ defines a block design $(X,X,R_i)$.
When $(X,X,R_i)$ is a (partial geometric) design, we say that
$R_i$ induces a (partial geometric) design.

\subsection{B. Xu's results}\label{secBD}
B. Xu characterized rank $4$
self-dual association schemes $\mathfrak{X}=(X, \{R_i\}_{i=0,\dots,3})$ such that
$R_1$, $R_2$, and $R_0\cup R_3$ induce partial geometric designs
by their character tables \cite[Theorems 1.2, 1.5, and 1.7]{Xu2023}.

\begin{thm}\label{thmXu}
  Let $\mathfrak{X}=(X, \{R_i\}_{i=0,\dots,3})$ be a self-dual association scheme
  such that $R_1$, $R_2$, and $R_0\cup R_3$ induce partial geometric designs.
  Then one of the following statements holds:
  \begin{enumerate}[(1)]
    \item $\mathfrak{X}$ is primitive and there exists an integer $\lambda$
    such that $3\mid \lambda$ and the character table of $\mathfrak{X}$ is
    $$T_1=\left(
      \begin{array}{cccc}
        1&\la(\la-1)&\la(\la+1)&(\la-1)(\la+1)\\
        1&\la&0&-\la-1\\
        1&0&-\la&\la-1\\
        1&-\la&\la&-1
      \end{array}
    \right).$$

    \item $\mathfrak{X}$ is imprimitive symmetric,
    and there exists an odd number $\theta$
    such that the character table of $\mathfrak{X}$ is
    $$T_2=\left(
      \begin{array}{cccc}
        1&\frac{\theta(\theta+1)}{2} & \frac{\theta(\theta+1)}{2} &\theta\\
        1&\frac{\theta+1}{2} & -\frac{\theta+1}{2} &-1\\
        1&-\frac{\theta+1}{2} & \frac{\theta+1}{2} &-1\\
        1&-\frac{\theta+1}{2} & -\frac{\theta+1}{2} &\theta\\
      \end{array}
    \right)$$
    or $\mathfrak{X}\cong K_m\wr K_2\wr K_m$ for some positive integer $m$. 
    \item $\mathfrak{X}$ is imprimitive non-symmetric,
    and there exists an odd number $\theta$
    such that the character table of $\mathfrak{X}$ is
    $$T_3=\left(
      \begin{array}{cccc}
        1&\frac{\theta(\theta+1)}{2} & \frac{\theta(\theta+1)}{2} &\theta\\
        1&\frac{(\theta+1)\sqrt{-1}}{2} & -\frac{(\theta+1)\sqrt{-1}}{2} &-1\\
        1&-\frac{(\theta+1)\sqrt{-1}}{2} & \frac{(\theta+1)\sqrt{-1}}{2} &-1\\
        1&-\frac{\theta+1}{2} & -\frac{\theta+1}{2} &\theta\\
      \end{array}
    \right)$$
    Conversely, if the character table of an association scheme is $T_1$,
    $T_2$, or $T_3$, then it is self-dual and
    $R_1$, $R_2$, and $R_0\cup R_3$ induce partial geometric designs.
\end{enumerate}
\end{thm}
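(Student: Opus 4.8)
The plan is to reduce the three design hypotheses to purely spectral conditions on the character table and then solve the resulting finite system. First I would record the standard matrix reformulation: writing $N$ for the adjacency matrix of the block design under consideration, $(X,X,I)$ is a partial geometric design if and only if $NN^{\top}N=\beta N+\alpha(J-N)$ for some integers $\alpha,\beta$, where $J$ is the all-ones matrix. Applying this with $N=A_1$, $N=A_2$, and $N=A_0+A_3=I+A_3$ (so that $N^{\top}=\sum_{i\in S}A_{i^*}$, e.g.\ $A_{1^*}$ and $A_{2^*}$ respectively) and diagonalizing simultaneously over the semisimple commutative algebra $\C\mathfrak{X}\cong\C^4$, the identity becomes, on the $\chi$-eigenspace for each nontrivial $\chi\in\Irr(\mathfrak{X})$,
$$|\chi(N)|^2\,\chi(N)=(\beta-\alpha)\,\chi(N),$$
since $\chi(N^{\top})=\ol{\chi(N)}$ in a commutative scheme. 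Hence $R$ induces a partial geometric design exactly when every nontrivial value $\chi(N)$ is either $0$ or has one fixed modulus $\sqrt{\beta-\alpha}$; conversely, if the nontrivial values lie in $\{0\}\cup\{\text{modulus }\sqrt{c}\}$, then $NN^{\top}N-cN$ acts as $0$ on every nontrivial eigenspace and as a scalar on the trivial one, so it is a multiple of $J$ and the design property follows. This equivalence is the engine of the argument, and I would isolate it as a lemma.

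Next I would set up the unknown $4\times4$ self-dual character table. Self-duality lets me order indices so that $m_i=n_i$ and $\chi_i(A_j)/n_j=\chi_j(A_i)/n_i$, which cuts down the free entries sharply; the trivial row is $\chi_0(A_j)=n_j$ and the first column is $\chi_i(A_0)=1$. I would then impose the spectral criterion on the columns $A_1$, $A_2$ and on $I+A_3$: the nonzero entries of column $1$ share a modulus, likewise column $2$, and the nonzero values among $1+\chi_i(A_3)$ share a modulus. Combining these with the two orthogonality relations and the valency relation $\sum_j n_j=|X|$ pins the entries down up to a single parameter. Throughout I would read off primitivity from Lemma \ref{lemclosed}: the pair $\{A_0,A_3\}$ is a closed subset precisely when $\chi_i(A_3)=-1$ for every $\chi_i$ not containing it in its kernel, and this is exactly the dichotomy separating the primitive table $T_1$ (no such vanishing) from the imprimitive tables $T_2,T_3$ (where $1+\chi_1(A_3)=1+\chi_2(A_3)=0$). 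Symmetric versus non-symmetric is then governed by whether $R_1=R_1^*$ or instead $R_2=R_1^*$, i.e.\ whether the off-diagonal values are real or genuinely complex; the latter forces the $\sqrt{-1}$ pattern of $T_3$.

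I expect the \textbf{main obstacle} to be the case-by-case solution of this system together with the arithmetic constraints. Once the shape of each table is fixed, the remaining freedom is a single parameter ($\la$ in the primitive case, $\theta$ in the imprimitive cases), and the congruences $3\mid\la$ and ``$\theta$ odd'' must be extracted from feasibility: since character values are algebraic integers, $(\theta+1)/2$ (and $\tfrac{\theta+1}{2}\sqrt{-1}$) must be integral, forcing $\theta$ odd, while $3\mid\la$ comes from integrality of the structure constants $p_{ij}^k$, which are rational expressions in the table entries with denominator dividing $|X|=3\la^2$. The genuinely delicate point is the imprimitive symmetric case, where the same spectral and orthogonality constraints admit, besides $T_2$, the degenerate family $K_m\wr K_2\wr K_m$; I would separate these by examining when the partial geometric parameters degenerate (equivalently, when further table entries vanish) and treat that family directly.

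Finally, the converse direction is short. Given a scheme whose table is $T_1$, $T_2$, or $T_3$, a direct check of the defining relation $\chi_i(A_j)/n_j=\chi_j(A_i)/n_i$ against the displayed entries confirms self-duality. For the design property I would simply invoke the spectral lemma: in each table the nonzero nontrivial entries of column $1$, of column $2$, and of $I+A_3$ each have a common modulus (namely $\la,\la,\la$ for $T_1$ and $\tfrac{\theta+1}{2},\tfrac{\theta+1}{2},\theta+1$ for $T_2,T_3$), so $R_1$, $R_2$, and $R_0\cup R_3$ induce partial geometric designs. No construction is needed here, since the hypothesis already supplies the scheme.
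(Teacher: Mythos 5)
First, a point of comparison that matters here: the paper does \emph{not} prove this statement. Theorem \ref{thmXu} is quoted from Xu \cite[Theorems 1.2, 1.5, and 1.7]{Xu2023}, and the paper's own contribution is the constructions in Sections 3 and 4. So your attempt must be judged against Xu's external proof, not an in-paper argument. On its own terms, your reduction lemma is correct and is indeed the standard engine: $(X,X,I)$ with $01$-matrix $N$ and constant row/column sums is a partial geometric design iff $NN^{\top}N=\beta N+\alpha(J-N)$, and since $\chi(N^{\top})=\ol{\chi(N)}$ in a commutative scheme, this is equivalent to all nontrivial values $\chi(N)$ being either $0$ or of one common modulus (the $1$-design condition is automatic for unions of classes, and integrality of $\alpha,\beta$ is free because they are entries of an integer matrix). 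Your converse direction is essentially complete: the displayed tables visibly satisfy the self-duality identity $\chi_i(A_j)/n_j=\chi_j(A_i)/n_i$, and the nonzero nontrivial values in the relevant columns have common moduli $\la,\la,\la$ for $T_1$ and $\tfrac{\theta+1}{2},\tfrac{\theta+1}{2},\theta+1$ for $T_2,T_3$, so the spectral lemma applies.

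The genuine gap is the forward direction, which is the entire content of the theorem and which you only outline. The assertion that the modulus conditions plus orthogonality ``pin the entries down up to a single parameter'' is exactly what must be proved, and the exceptional branch $K_m\wr K_2\wr K_m$ — whose character table is \emph{not} of the form $T_2$ — already shows the system does not simply collapse to a one-parameter family; you acknowledge this family but give no mechanism producing it (it emerges from the degenerate alternative in your own criterion, when additional nontrivial values $\chi(A_1)$ or $\chi(A_2)$ vanish, and ruling out everything else in that branch requires a real case analysis, not a remark that you would ``treat it directly''). Similarly, ``$3\mid\la$ comes from integrality of the $p_{ij}^k$'' is a conjecture, not a derivation: you would need to exhibit a specific structure constant, multiplicity, or Krein-type quantity whose integrality forces the congruence, and nothing in the sketch does so; the same applies to extracting ``$\theta$ odd'' beyond the (correct but easy) algebraic-integrality of $(\theta+1)/2$. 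In short, the strategy is the right one — it mirrors the method of \cite{MR4522422,Xu2023} — but as written the proposal defers precisely the case-by-case computations that constitute Xu's Theorems 1.2, 1.5, and 1.7, so it is a plan for a proof rather than a proof.
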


Xu gave infinitely many examples in \cite[Example 1.3]{Xu2023}
for the case (1) for $\lambda=3^\ell$.
However, he gave only one example for (2) and (3) with $\theta=3$, respectively
\cite[Examples 1.6 and 1.8]{Xu2023}.

\section{Construction -- Schur rings over ${\Z_2}^{2n}$}
The method considered in \cite{Hanaki-Hirai-Ponomarenko2022} can be used
to construct association schemes having the character tables $T_2$ in
Theorem \ref{thmXu}.
  
Let $n$ be a positive integer, and consider
the finite field $\F_{2^n}$.
Let $V$ be a $2$-dimensional $\F_{2^n}$-vector space.
We set
$$\mathfrak{L}=\{L_a\mid a\in \F_{2^n}\cup\{\infty\}\},$$
where
\begin{eqnarray*}
  L_a &=& \{(x,ax) \mid x\in \F_{2^n}\}\quad (a\in \F_{2^n}),\\
  L_\infty&=&\{(0,x) \mid x\in \F_{2^n}\}.
\end{eqnarray*}
Remark that $|\mathfrak{L}|=2^n+1$.
We set $L_a^\sharp=L_a\setminus\{(0,0)\}$.
Decompose $\mathfrak{L}=P_1\cup P_2\cup P_3$ such that
$|P_1|=|P_2|=2^{n-1}$ and $|P_3|=1$.
We also set
\begin{eqnarray*}
  S_0 &=& \{(0,0)\},\\
  S_i&=& \bigcup_{L_a\in P_i} L_a^\sharp\quad (i=1,2,3).
\end{eqnarray*}          
Then the partition $S_0\cup S_1\cup S_2\cup S_3$ of $V$
defines a Schur ring over ${\Z_2}^{2n}$ ($\cong V$ as an abelian group)
\cite[Theorem 1]{Hanaki-Hirai-Ponomarenko2022}.
By the Schur ring, we can define a rank $4$ symmetric association scheme
$\mathfrak{X}=(X, \{R_i\}_{i=0,\dots,3})$.
We will determine the character table of $\mathfrak{X}$.

Since the association scheme $\mathfrak{X}$ is a fusion scheme of
an abelian group ${\Z_2}^{2n}$,
the adjacency algebra $\C\mathfrak{X}$ is a subalgebra of the group algebra
$\C{\Z_2}^{2n}$ and every irreducible character of $\mathfrak{X}$ is a restriction
of an irreducible character of ${\Z_2}^{2n}$.

\begin{prop}\label{prop3A}
  The character table of the association scheme $\mathfrak{X}$
  is $T_2$ for $\theta=2^n-1$ in Theorem \ref{thmXu}.
\end{prop}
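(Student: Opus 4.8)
The plan is to compute directly the values $\psi_w(\underline{S_i})$ of each irreducible character of $V\cong{\Z_2}^{2n}$ on the basis elements $\underline{S_i}$ of $\C\mathfrak{X}$, using the fact recorded just above the statement that every irreducible character of $\mathfrak{X}$ is the restriction of some $\psi_w$. To parametrize the characters of $V$, I would fix a nondegenerate $\F_{2^n}$-bilinear form $\langle\,,\,\rangle$ on the $2$-dimensional space $V$ together with the absolute trace $\mathrm{Tr}\colon\F_{2^n}\to\F_2$; then the irreducible characters of $V$ are exactly $\psi_w\colon v\mapsto(-1)^{\mathrm{Tr}\langle w,v\rangle}$ for $w\in V$. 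Since each $S_i$ is a union of the punctured lines $L_a^\sharp$, everything reduces to evaluating the line sums $\sum_{v\in L_a^\sharp}\psi_w(v)$.

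The heart of the argument is a single-line computation. For a line $L_a$, the restriction $\psi_w|_{L_a}$ is a character of the subgroup $L_a\cong\F_{2^n}$, so $\sum_{v\in L_a}\psi_w(v)$ equals $2^n$ if this restriction is trivial and $0$ otherwise; subtracting the origin gives $\sum_{v\in L_a^\sharp}\psi_w(v)=2^n-1$ when $\psi_w|_{L_a}$ is trivial and $-1$ otherwise. Writing $L_a=\F_{2^n}u$ and using $\langle w,tu\rangle=t\langle w,u\rangle$ together with the nondegeneracy of the trace form shows that $\psi_w|_{L_a}$ is trivial precisely when $\langle w,u\rangle=0$, i.e. when $L_a\subseteq w^\perp$. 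The key geometric point is then that for $w\neq0$ the orthogonal complement $w^\perp$ is again an $\F_{2^n}$-line (as $\langle\,,\,\rangle$ is $\F_{2^n}$-bilinear and nondegenerate), so there is exactly one $L_a$ on which $\psi_w$ is trivial, namely $L_a=w^\perp$.

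With this in hand I would assemble the rows by casework on the location of the distinguished line $w^\perp$ among $P_1,P_2,P_3$. Summing over $L_a\in P_i$ gives $\psi_w(\underline{S_i})=2^n-|P_i|$ when $w^\perp\in P_i$ and $-|P_i|$ otherwise; substituting $|P_1|=|P_2|=2^{n-1}$, $|P_3|=1$ and recalling that $\frac{\theta+1}{2}=2^{n-1}$ and $\theta=2^n-1$, the three cases $w^\perp\in P_1$, $w^\perp\in P_2$, $w^\perp\in P_3$ reproduce rows $2$, $3$, $4$ of $T_2$, while $w=0$ gives the trivial row of valencies $\bigl(1,\frac{\theta(\theta+1)}{2},\frac{\theta(\theta+1)}{2},\theta\bigr)$. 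Since $w\mapsto w^\perp$ is surjective onto $\mathfrak{L}$, each of $P_1,P_2,P_3$ is realized, so all four distinct rows occur; as $\mathfrak{X}$ has rank $4$ and every one of its irreducible characters is some $\psi_w|_{\C\mathfrak{X}}$, these four rows constitute the entire character table.

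I expect the main obstacle to be the bookkeeping that pins down, for each $w$, the unique line on which $\psi_w$ is trivial, and in particular the passage from ``$\psi_w$ trivial on $L_a$'' to ``$L_a=w^\perp$'' via the $\F_{2^n}$-bilinearity and the nondegeneracy of the trace pairing; once that correspondence is clean, the remaining steps are a short case analysis. A minor point worth noting is that the answer depends on the decomposition $\mathfrak{L}=P_1\cup P_2\cup P_3$ only through the cardinalities $|P_i|$, which the computation above makes transparent and which also explains why any such decomposition yields the same table $T_2$.
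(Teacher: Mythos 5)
Your proposal is correct and takes essentially the same route as the paper: both arguments rest on the fact that each nontrivial character of $V$ is trivial on exactly one line of $\mathfrak{L}$, yielding line sums $2^n-1$ on that line and $-1$ on every other, followed by the same casework on which cell $P_i$ contains the distinguished line. Your explicit parametrization $\psi_w(v)=(-1)^{\mathrm{Tr}\langle w,v\rangle}$ via a nondegenerate form and the trace merely makes concrete the existence--uniqueness step that the paper asserts directly through its easy lemma, so the two proofs coincide in substance.
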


we need an easy lemma.

\begin{lem}
  Let $L\in\mathfrak{L}$.
  If $\varphi\in\Irr(V)$, $\varphi\ne 1_V$, and $\Ker \varphi\supset L$,
  then $\sum_{\bs{v}\in L^\sharp}\varphi(\bs{v})=2^n-1$ and
  $\sum_{\bs{v}\in (L')^\sharp}\varphi(\bs{v})=-1$ for $L\ne L'\in\mathfrak{L}$.
\end{lem}

\begin{proof}
  By $\Ker \varphi\supset L$,
  $\sum_{\bs{v}\in L^\sharp}\varphi(\bs{v})=|L^\sharp|=2^n-1$.
  Suppose $L'\ne L$.
  If $\Ker \varphi\supset L'$, then $\varphi=1_V$.
  Thus $\Ker \varphi\not\supset L'$ and
  $\sum_{\bs{v}\in L'}\varphi(\bs{v})=0$.
\end{proof}

\begin{proof}[Proof of Proposition \ref{prop3A}]
  First of all, there is a trivial character $\chi_0$ of $\mathfrak{X}$.
  For any $L_a\in \mathfrak{L}$, there exists $1_V\ne \varphi\in\Irr(V)$
  such that $\Ker \varphi\supset L_a$.
  In this case, $\Ker \varphi\not\supset L_b$ for $b\ne a$.

  Suppose $P_3=\{L_a\}$ and choose $1_V\ne \varphi\in\Irr(V)$
  such that $\Ker \varphi\supset L_a$.
  Then $\varphi$ defines $\chi_3\in\Irr(\mathfrak{X})$ such that
  $\chi_3(A_0)=1$, $\chi_3(A_1)=\chi_3(A_2)=-2^{n-1}$, and $\chi_3(A_3)=2^n-1$.
  Suppose $L_a\in P_1$.
  Choose $1_V\ne \varphi\in\Irr(V)$
  such that $\Ker \varphi\supset L_a$.
  Then $\varphi$ defines $\chi_1\in\Irr(\mathfrak{X})$ such that
  $\chi_1(A_0)=1$, $\chi_1(A_1)=2^{n-1}$, $\chi_1(A_2)=-2^{n-1}$,
  and $\chi_1(A_3)=-1$.
  Similarly, we can define
  $\chi_2\in\Irr(\mathfrak{X})$ such that
  $\chi_2(A_0)=1$, $\chi_2(A_1)=-2^{n-1}$, $\chi_2(A_2)=2^{n-1}$,
  and $\chi_2(A_3)=-1$.
  Now we obtained $4$ distinct irreducible characters of $\mathfrak{X}$
  and the character table is $T_2$.
\end{proof}

\begin{thm}\label{thm1}
  The association scheme $\mathfrak{X}=(X, \{R_i\}_{i=0,\dots,3})$
  defined above is self-dual and
  $R_1$, and $R_2$, and $R_0\cup R_3$ induce partial geometric designs.
\end{thm}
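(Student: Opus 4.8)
The plan is to reduce the whole statement to the converse direction of Theorem \ref{thmXu}, since the substantive work has already been carried out in Proposition \ref{prop3A}. That proposition pins down the character table of $\mathfrak{X}$ explicitly, and Xu's theorem packages precisely the conclusion we want for any scheme whose character table is $T_2$. So the proof should be essentially a two-line deduction: identify the table, then cite the converse.

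First I would invoke Proposition \ref{prop3A} to record that the character table of $\mathfrak{X}$ equals $T_2$ under the specialization $\theta = 2^n - 1$. At this point one must check that this value meets the hypothesis imposed in Theorem \ref{thmXu}(2), namely that $\theta$ be odd; since $2^n - 1$ is odd for every positive integer $n$, this is immediate. It is also worth verifying that the four rows produced in Proposition \ref{prop3A} genuinely coincide with the rows of $T_2$ after the substitution, using $\frac{\theta(\theta+1)}{2} = 2^{n-1}(2^n - 1)$ and $\frac{\theta+1}{2} = 2^{n-1}$. This amounts to matching the trivial character $\chi_0$ with valencies $(1, 2^{n-1}(2^n-1), 2^{n-1}(2^n-1), 2^n-1)$ against the top row, and $\chi_1,\chi_2,\chi_3$ against the remaining rows, possibly after a harmless reordering of indices.

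With the table identified as $T_2$, I would then apply the converse clause of Theorem \ref{thmXu} verbatim: any commutative association scheme whose character table is $T_2$ is self-dual and has $R_1$, $R_2$, and $R_0 \cup R_3$ inducing partial geometric designs. Applying this to $\mathfrak{X}$ yields the theorem. Because Proposition \ref{prop3A} has already done the only genuine computation, there is no real obstacle left; the single point demanding care is the bookkeeping above, namely confirming that the computed entries agree with $T_2$ exactly and that $\theta = 2^n - 1$ lies in the admissible odd range, so that the converse of Theorem \ref{thmXu} is legitimately applicable.
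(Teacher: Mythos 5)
Your proposal is correct and matches the paper's own (essentially implicit) argument: the paper leaves Theorem \ref{thm1} without a written proof precisely because, as you say, Proposition \ref{prop3A} identifies the character table as $T_2$ with $\theta = 2^n-1$ odd, and the converse clause of Theorem \ref{thmXu} then yields self-duality and the three partial geometric designs --- exactly the one-line justification the paper gives for the parallel Theorem \ref{thm2}. Your extra bookkeeping (checking $\frac{\theta(\theta+1)}{2} = 2^{n-1}(2^n-1)$ and $\frac{\theta+1}{2} = 2^{n-1}$) is sound and only makes explicit what the paper takes for granted.
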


\section{Construction -- Schur rings over ${\Z_4}^{n}$}
We write the matrix ring over $\Z_m$ of degree $n$ by $\M_n(\Z_m)$,
and the group consisting of invertible elements
of $\M_n(\Z_m)$ by $\GL_n(\Z_m)$.
By the usual matrix action,
$\GL_n(\Z_m)$ is just the automorphism group of an abelian group ${\Z_m}^n$.
The map $\overline{\bullet}:\Z_4\to \Z_2$ is the projection
$\Z_4\ni a+4\Z\mapsto a+2\Z\in \Z_2$.
The same notation will be used for ${\Z_m}^n$ and $\M_n(\Z_m)$.
The identity matrix in $\M_n(\Z_4)$ is denoted by $E_n$.

\begin{lem}\label{lemA}
  For $A\in \M_n(\Z_4)$, $A\in \GL_n(\Z_4)$ if and only if
  $\overline{A}\in \GL_n(\Z_2)$.
\end{lem}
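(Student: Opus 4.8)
The plan is to reduce the whole statement to the classical determinant criterion for invertibility over a commutative ring: for any commutative ring $R$, a matrix $A\in\M_n(R)$ lies in $\GL_n(R)$ if and only if $\det A$ is a unit of $R$. The ``if'' direction follows from the adjugate identity $A\cdot\operatorname{adj}(A)=\det(A)\,E_n$, which exhibits $\det(A)^{-1}\operatorname{adj}(A)$ as a two-sided inverse once $\det A$ is a unit, and the ``only if'' direction follows from multiplicativity of the determinant applied to $A$ and its inverse.

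First I would record the units in the two relevant rings. The units of $\Z_4$ are exactly the odd residues $\{1,3\}$, whereas the only unit of $\Z_2$ is $1$. Consequently $A\in\GL_n(\Z_4)$ is equivalent to $\det A$ being odd in $\Z_4$, and $\overline{A}\in\GL_n(\Z_2)$ is equivalent to $\det\overline{A}\ne 0$ in $\Z_2$. The next observation is that $\overline{\bullet}\colon\Z_4\to\Z_2$ is a ring homomorphism, so the induced map $\M_n(\Z_4)\to\M_n(\Z_2)$ commutes with the determinant, the determinant being a fixed integer polynomial in the matrix entries; hence $\det\overline{A}=\overline{\det A}$. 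Chaining these facts gives the equivalences $A\in\GL_n(\Z_4)\iff\det A\text{ is odd}\iff\overline{\det A}=1\iff\det\overline{A}=1\iff\overline{A}\in\GL_n(\Z_2)$, which is precisely the assertion.

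There is no real obstacle here; the only point requiring a little care is the implication ``$\det A$ a unit $\Rightarrow$ $A$ invertible'' over $\Z_4$, which is not a field, so one must invoke the adjugate identity rather than Gaussian elimination. Alternatively, one could prove the forward implication directly, noting that $AB=E_n$ in $\M_n(\Z_4)$ immediately yields $\overline{A}\,\overline{B}=E_n$ in $\M_n(\Z_2)$, and reserve the determinant argument for the converse (lifting invertibility from $\Z_2$ to $\Z_4$); but treating both directions uniformly through the single relation $\det\overline{A}=\overline{\det A}$ is the cleanest route and is the one I would write up.
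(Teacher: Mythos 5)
Your proof is correct: the determinant criterion for invertibility over a commutative ring, the computation of the unit groups of $\Z_4$ and $\Z_2$, and the identity $\det\overline{A}=\overline{\det A}$ (valid because $\overline{\bullet}$ is a ring homomorphism and $\det$ is a fixed integer polynomial in the matrix entries) chain together exactly as you describe, and your care about invoking the adjugate rather than Gaussian elimination over the non-field $\Z_4$ is well placed. For the record, the paper states Lemma~\ref{lemA} with no proof at all, treating it as standard background, so there is no authorial argument to compare against; your write-up is the canonical justification, and it in fact establishes the more general fact that reduction $\Z_{p^k}\to\Z_p$ reflects invertibility of matrices.
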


Let $\F_{2^n}$ be the finite field of order $2^n$,
and let $\zeta$ be a primitive element of $\F_{2^n}$.
Since $\F_{2^n}=\{\sum_{i=0}^{n-1}a_i\zeta^i\mid a_i\in \F_2\ (i=0,\dots,n-1)\}$,
the map $h:\F_{2^n}\to{\Z_2}^n$, $\sum_{i=0}^{n-1}a_i\zeta^i \mapsto (a_0,\dots,a_{n-1})$
is a bijection.
Let $g:\F_{2^n}\to \M_n(\Z_2)$ be the regular representation of $\F_{2^n}$
as an $\F_2$-algebra.

\begin{lem}\label{lemB}
  There is a primitive element $\zeta$ of $\F_{2^n}$ and $P\in \M_n(\Z_4)$
  such that $\overline{P}$ is an image of $\zeta$ by the regular representation
  and the order of $P$ is $2^n-1$.
\end{lem}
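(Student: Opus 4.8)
The plan is to reduce the lemma to an elementary fact about the reduction homomorphism $\GL_n(\Z_4)\to\GL_n(\Z_2)$, $P\mapsto\ol P$, together with the observation that its kernel has exponent $2$. Write $m=2^n-1$. First I would fix any primitive element $\zeta$ of $\F_{2^n}$ (any one will do) and recall that the regular representation $g$ is a faithful homomorphism of $\F_2$-algebras, so it restricts to an injective group homomorphism $\F_{2^n}^\times\to\GL_n(\Z_2)$; since $\zeta$ generates the cyclic group $\F_{2^n}^\times$ of order $m$, the matrix $g(\zeta)$ has multiplicative order exactly $m$ in $\GL_n(\Z_2)$.

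Next I would lift. Choose any $Q\in\M_n(\Z_4)$ with $\ol Q=g(\zeta)$ by lifting the entries arbitrarily; since $\ol Q=g(\zeta)\in\GL_n(\Z_2)$, Lemma \ref{lemA} guarantees $Q\in\GL_n(\Z_4)$. The crucial structural point is the kernel $K$ of the reduction map, namely the matrices of the form $E_n+2M$. Because $4=0$ in $\Z_4$ one computes $(E_n+2M)(E_n+2M')=E_n+2(M+M')$, so $K$ is an elementary abelian $2$-group; in particular every element of $K$ satisfies $(E_n+2M)^2=E_n$.

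Now I would simply set $P=Q^{2^n}$ and verify the two required properties. For the residue, $\ol P=g(\zeta)^{2^n}=g(\zeta)$, because $g(\zeta)^m=g(\zeta^m)=g(1)=E_n$; thus $\ol P$ is indeed the image of $\zeta$ under the regular representation. For the order, note that $k:=Q^m$ reduces to $g(\zeta)^m=E_n$, hence $k\in K$ and $k^2=E_n$; therefore $P^m=(Q^m)^{2^n}=k^{2^n}=(k^2)^{2^{n-1}}=E_n$. Consequently the order of $P$ divides $m$, while it is also a multiple of the order of $\ol P=g(\zeta)$, which equals $m$; hence the order of $P$ is exactly $m=2^n-1$.

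I do not expect a serious obstacle here: the only points to get right are the exponent-$2$ computation in $K$ and the parity observation that $2^n$ is even, which together force $k^{2^n}=E_n$. One could equally invoke coprime lifting in the group extension $1\to K\to\GL_n(\Z_4)\to\GL_n(\Z_2)\to 1$, since $\gcd(m,|K|)=1$, but the explicit power $P=Q^{2^n}$ keeps the argument self-contained and avoids any appeal to Schur--Zassenhaus.
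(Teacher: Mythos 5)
Your proof is correct, and it takes a genuinely different route from the paper's. Both arguments rest on the same structural fact --- the kernel of the reduction $\GL_n(\Z_4)\to\GL_n(\Z_2)$ consists of matrices $E_n+2M$ and has exponent $2$, so in particular $(E_n+2R)^2=E_n$ --- but they exploit it differently. The paper lifts $g(\xi)$ to $Q$, deduces that the order of $Q$ is $2^n-1$ or $2(2^n-1)$, and in the latter case replaces both the matrix and the primitive element, taking $P=Q^2$ and $\zeta=\xi^2$ (legitimate since $2^n-1$ is odd); this case split is exactly why the lemma is stated with an existential quantifier over $\zeta$. You instead fix $\zeta$ once and for all and take the single uniform power $P=Q^{2^n}$, observing that the exponent $2^n$ is simultaneously $\equiv 1 \pmod{2^n-1}$ (so $\ol{P}=g(\zeta)^{2^n}=g(\zeta)$) and even (so $P^{2^n-1}=(Q^{2^n-1})^{2^n}=E_n$, since $Q^{2^n-1}$ lies in the exponent-$2$ kernel). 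Your closing step is also sound: the order of $P$ is a multiple of the order of $\ol{P}=g(\zeta)$ because reduction is a group homomorphism, and $g$ is injective on $\F_{2^n}^\times$, so the order is exactly $2^n-1$. What your version buys: no case analysis, and a slightly stronger conclusion --- \emph{every} primitive element $\zeta$ admits a lift of $g(\zeta)$ of order exactly $2^n-1$, not merely some primitive element; as you note, it is the explicit avatar of coprime lifting in the extension of $\GL_n(\Z_2)$ by the $2$-group kernel, without invoking Schur--Zassenhaus. What the paper's version buys is a marginally lighter computation (a single squaring rather than the $2^n$-th power), at the cost of possibly changing $\zeta$; for the paper's subsequent use this difference is immaterial, since only the companion-matrix normal form of $P$ matters afterwards.
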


\begin{proof}
  Fix a primitive element $\xi$ of $\F_{2^n}$ arbitrarily.
  Fix $Q\in M_n(\Z_4)$ such that $\overline{Q}=g(\xi)$.
  Since the order of $\overline{Q}$ is $2^n-1$,
  the order of $Q$ is a multiple of $2^n-1$.
  There exists $R\in \M_n(\Z_4)$ such that $Q^{2^n-1}=E_n+2R$.
  Thus $Q^{2(2^n-1)}=(E_n+2R)^2=E_n$.
  Now the order of $Q$ is $2^n-1$ or $2(2^n-1)$.
  If the order is $2^n-1$, then $\zeta=\xi$ and $P=Q$ satisfy the condition.

  Suppose that the order of $Q$ is $2(2^n-1)$.
  Since $2^n-1$ is odd, $\zeta=\xi^2$ is also a primitive element and
  the order of $P=Q^2$ is $2^n-1$.
  Now $\zeta$ and $P$ satisfy the required condition.
\end{proof}

Define $\overline{f}:{\Z_2}^n\to \M_n(\Z_2)$ by $\overline{f}=g\circ h^{-1}$.
Choose $P\in \M_n(\Z_4)$ such that $\overline{P}=g(\zeta)$
and the order of $P$ is $2^n-1$ by Lemma \ref{lemB}.
We may suppose that $P$ is of the following form :
$$P
=\left(
  \begin{array}{ccccc}
    0&1& \\
     &0&1&\\
     & & \ddots & \ddots &\\
     & &        & 0&1\\
    c_0&c_1&\dots&c_{n-1}&c_{n-1}
  \end{array}
\right).$$
Remark that the following arguments depends on the form of $P$.

\begin{ex}\label{ex3}
  Set $n=3$.
  The minimal polynomial of a primitive element of $\F_{2^3}$ is
  $x^3+x+1$. We put
  $$Q=\left(
    \begin{array}{ccc}
      0&1&0\\
      0&0&1\\
      1&1&0
    \end{array}
  \right).$$
  Since $Q^{7}\ne E_3$, we consider $Q^2$.
  Set $\bs{v}=(1,0,0)$.
  Then
  \begin{eqnarray*}
    \bs{v} &=& (1,0,0),\\
    \bs{v}Q^2 &=& (0,0,1),\\
    \bs{v}(Q^2)^2 &=& (0,1,1),\\
    \bs{v}(Q^2)^3 &=& (1,2,1)=1\bs{v}+3\bs{v}Q^2+2\bs{v}(Q^2)^2.
  \end{eqnarray*}
  We can choose
  $$P=\left(
    \begin{array}{ccc}
      0&1&0\\
      0&0&1\\
      1&3&2
    \end{array}
  \right)$$
  and $P^{7}=E_3$ holds.
\end{ex}

Define $f:{\Z_4}^n\to \M_n(\Z_4)$
by $f(b_0,\dots,b_{n-1})=\sum_{i=0}^{n-1}b_iP^i$.
Then the following diagram is commutative.

$$\xymatrix{
  {\Z_4}^n \ar[r]^{\overline{\bullet}} \ar[d]_f
  & {\Z_2}^n  \ar[d]_{\overline{f}} & \F_{2^n} \ar[l]_h \ar[dl]^g\\
  \M_n(\Z_4) \ar[r]_{\overline{\bullet}} & \M_n(\Z_2)
}$$

We set $H=\langle P\rangle$, a cyclic group of order $2^n-1$.

We set
\begin{eqnarray*}
  V_0&=&\{(0,\dots,0)\},\\
  V_1&=&\{(a_0,\dots,a_{n-1})\mid a_i\in \{0,2\}\ (i=0,\dots,n-1)\},\\
  V_2&=& {\Z_4}^n,
\end{eqnarray*}
and
$$V_1^\sharp = V_1\setminus V_0,\quad
  V_2^\sharp = V_2 \setminus V_1.$$
We set $K=\{E_n+f(\bs{v})\mid \bs{v}\in V_1\}$.
Since $(E_n+f(\bs{v}))(E_n+f(\bs{w}))=E_n+f(\bs{v}+\bs{w})$, 
$K$ is isomorphic to $V_1$, an elementary abelian group of order $2^n$.

\begin{lem}\label{lemC}
  The product $HK=H\times K$ is a group of order $2^n(2^n-1)$ and $V_0$, $V_1^\sharp$,
  $V_2^\sharp$ are $HK$-orbits on ${\Z_4}^n$.
  In particular, the action of $HK$ on $V_2^\sharp$ is regular.
\end{lem}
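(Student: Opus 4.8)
The plan is to realize $H$ and $K$ as commuting subgroups of $\GL_n(\Z_4)=\mathrm{Aut}(\Z_4^n)$ and then analyze their joint action orbit by orbit on the chain $V_0\subset V_1\subset \Z_4^n$. First I would record that both groups lie in $\GL_n(\Z_4)$: for $H=\langle P\rangle$ this holds because $\overline{P}=g(\zeta)\in\GL_n(\Z_2)$ and Lemma \ref{lemA} applies, while for $K$ every element $E_n+f(\bs{v})$ with $\bs{v}\in V_1$ satisfies $\overline{E_n+f(\bs{v})}=E_n$ (since $f(\bs{v})\in 2\M_n(\Z_4)$), again invertible by Lemma \ref{lemA}. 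Because every element of $H$ and of $K$ is a polynomial in $P$, the two groups commute elementwise and $HK$ is abelian. To obtain $HK=H\times K$ of order $2^n(2^n-1)$ it remains to check $H\cap K=\{E_n\}$: an element $P^j\in K$ would force $\overline{P^j}=E_n$, i.e. $g(\zeta^j)=E_n$, hence $\zeta^j=1$ and $j\equiv 0\pmod{2^n-1}$.

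Next I would verify that $HK$ stabilizes the chain, so that it permutes each of $V_0$, $V_1^\sharp$, $V_2^\sharp$ within itself. It fixes $V_0=\{\bs{0}\}$; it preserves $V_1=2\Z_4^n$ because $P$ (hence $H$) sends $2\bs{w}\mapsto 2(\bs{w}P)$, and $K$ fixes $V_1$ pointwise since for $\bs{w}\in 2\Z_4^n$ and $\bs{v}\in V_1$ the product $\bs{w}f(\bs{v})\in 4\Z_4^n=0$. On $V_1^\sharp$ the $K$-part is thus trivial, so the action reduces to that of $H$ alone; under the identification $V_1=2\Z_4^n\cong\Z_2^n\cong\F_{2^n}$ given by reduction modulo $2$ followed by $h^{-1}$, the matrix $P$ acts as multiplication by $\zeta$, so $H$ acts as the full multiplicative group $\F_{2^n}^\times=\langle\zeta\rangle$, which is regular on the $2^n-1$ nonzero elements. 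Hence $V_1^\sharp$ is a single $HK$-orbit.

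The crux is $V_2^\sharp$, where $|V_2^\sharp|=4^n-2^n=2^n(2^n-1)=|HK|$, so it suffices to prove that the stabilizer in $HK$ of one point $\bs{x}\in V_2^\sharp$ is trivial; transitivity and regularity then follow from the orbit--stabilizer theorem together with the $HK$-invariance of $V_2^\sharp$. Writing a stabilizing element uniquely as $P^j(E_n+f(\bs{v}))$ and noting that $\bs{x}\notin V_1$ gives $\overline{\bs{x}}\ne\bs{0}$, corresponding to a nonzero $\alpha\in\F_{2^n}$, I would reduce $\bs{x}\,P^j(E_n+f(\bs{v}))=\bs{x}$ modulo $2$ to get $\alpha\zeta^j=\alpha$, forcing $j\equiv 0$ so that the element is $E_n+f(\bs{v})$. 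Then $\bs{x}f(\bs{v})=0$; writing $f(\bs{v})=2\sum_i b_iP^i$ with $b_i\in\{0,1\}$ and reducing $2\,\bs{x}\bigl(\sum_i b_iP^i\bigr)=0$ modulo $2$ yields $\alpha\gamma=0$ in $\F_{2^n}$ with $\gamma=\sum_i\overline{b_i}\zeta^i$, whence $\gamma=0$ since $\F_{2^n}$ is a field and $\alpha\ne 0$, and finally $\bs{v}=\bs{0}$ because $\{1,\zeta,\dots,\zeta^{n-1}\}$ is an $\F_2$-basis.

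The main obstacle is exactly this last computation for $V_2^\sharp$: correctly tracking the mod-$2$ reductions through the commutative diagram relating $f$, $\overline{f}$, $g$, and $h$, and then combining the no-zero-divisor property of $\F_{2^n}$ with the $\F_2$-linear independence of the powers of $\zeta$ to force the $K$-part of the stabilizer to be trivial. Everything else is bookkeeping about the subgroup structure of $HK$ and the invariance of the chain.
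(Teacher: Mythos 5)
Your proof is correct, and its skeleton matches the paper's: show that $H$ and $K$ commute elementwise (every element of $K$ being a polynomial in $P$) and form a direct product of order $2^n(2^n-1)$, check that the chain $V_0\subset V_1\subset {\Z_4}^n$ is $HK$-invariant, identify the $H$-action on $V_1^\sharp$ with multiplication by $\zeta$ on $\F_{2^n}^\times$, and finish on $V_2^\sharp$ by a trivial-stabilizer-plus-counting argument. Where you genuinely diverge is the stabilizer computation. The paper stabilizes the specific point $\bs{e}=(1,0,\dots,0)$ and exploits the companion form of $P$, which gives $\bs{e}(E_n+f(\bs{v}))=\bs{e}+\bs{v}$, to force $\bs{v}=\bs{0}$; indeed the author remarks right after displaying $P$ that the arguments depend on this form. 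You instead take an arbitrary $\bs{x}\in V_2^\sharp$, kill the $H$-part by the same mod-$2$ reduction ($\alpha\zeta^j=\alpha$ with $\alpha=h^{-1}(\overline{\bs{x}})\ne 0$), and then kill the $K$-part by writing $f(\bs{v})=2\sum_i b_iP^i$ and converting $\bs{x}f(\bs{v})=\bs{0}$ into $\alpha\gamma=0$ in $\F_{2^n}$ with $\gamma=\sum_i\overline{b_i}\zeta^i$, using that $\F_{2^n}$ has no zero divisors and that $1,\zeta,\dots,\zeta^{n-1}$ are $\F_2$-independent. This buys independence from the normalization of $P$ (any lift of $g(\zeta)$ of order $2^n-1$ would do for this lemma) and shows directly that every point of $V_2^\sharp$ has trivial stabilizer, whereas the paper's version is shorter at the cost of fixing the companion form. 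Two smaller variations are also fine: you prove $H\cap K=\{E_n\}$ by reduction mod $2$ and injectivity of $g$ where the paper invokes coprimality of $|H|$ and $|K|$, and you explicitly record the $HK$-invariance of $V_2^\sharp$ before applying orbit--stabilizer, a point the paper leaves implicit.
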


\begin{proof}
  Since every element of $K$ is a polynomial of $P$,
  $H$ commutes with $K$ element-wise.
  The orders of $H$ and $K$ are coprime, and we have $HK=H\times K$ is a group.

  Clearly $V_0$ is an orbit.
  Also $V_1^\sharp$ is an orbit because the action of $H$ on $V_1$ is same as
  the action of $\ol{H}$ on ${\F_2}^n$.

  Now we consider the stabilizer of $(1,0,\dots,0)$.
  Suppose that $(1,0,\dots,0)P^i(E_n+f(\bs{v}))=(1,0,\dots,0)$.
  Considering the image by $\ol{\bullet}$, $i$ must be $0$.
  Since $(1,0,\dots,0)(E_n+f(\bs{v}))=(1,0,\dots,0)+\bs{v}$
  by the form of $P$,
  we have $\bs{v}=\bs{0}$.
  So the stabilizer must be trivial.
  By $|HK|=2^n(2^n-1)=|V_2^\sharp|$, the action of $HK$ on $V_2^\sharp$ is regular.
\end{proof}

Let $W$ be a subgroup of $V_1$ of order $2^{n-1}$,
and set $K_0=\{E_n+f(\bs{w})\mid \bs{w}\in W\}$ and $G=HK_0$.

\begin{lem}\label{lemD}
  There are four $G$-orbits $S_0$, $S_1$, $S_2$, $S_3$ on ${\F_4}^n$ of length
  $1$, $2^{n-1}(2^n-1)$, $2^{n-1}(2^n-1)$, and $2^n-1$, respectively.
  If $(2,0,\dots,0)\in W$, then $-S_1=S_1$, and otherwise $-S_1=S_2$.
\end{lem}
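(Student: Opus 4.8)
The plan is to decompose the $G$-action according to the three $HK$-orbits $V_0$, $V_1^\sharp$, $V_2^\sharp$ furnished by Lemma \ref{lemC}, exploiting that $G=H\times K_0$ is an index-$2$ subgroup of $HK=H\times K$ (indeed $[K:K_0]=|V_1|/|W|=2^n/2^{n-1}=2$, and $H$, $K_0$ commute with coprime orders). First, $V_0=\{(0,\dots,0)\}$ is fixed pointwise, giving $S_0$ of length $1$. Next, on $V_1^\sharp$ the subgroup $H\le G$ already acts transitively, since (as in the proof of Lemma \ref{lemC}) its action agrees with that of $\ol{H}$ on ${\F_2}^n\cong V_1$, which is transitive on nonzero vectors; hence $V_1^\sharp$ remains a single $G$-orbit $S_3$ of length $2^n-1$.

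The essential case is $V_2^\sharp$. By Lemma \ref{lemC} the action of $HK$ there is regular, so fixing the base point $\bs{x}_0=(1,0,\dots,0)$ the orbit map $HK\to V_2^\sharp$, $u\mapsto \bs{x}_0 u$, is a bijection that intertwines the right-multiplication action of $G$ on $HK$ with the $G$-action on $V_2^\sharp$. Under this identification the $G$-orbit of $\bs{x}_0 u$ corresponds to the left coset $uG$, so the number of $G$-orbits on $V_2^\sharp$ equals $[HK:G]=2$, each of length $|G|=2^{n-1}(2^n-1)$. These are $S_1$ (containing $\bs{x}_0$, corresponding to $G$ itself) and $S_2$ (corresponding to the nontrivial coset). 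Together with $S_0$ and $S_3$ this gives all four orbits with the asserted lengths.

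For the dichotomy on $-S_1$, the key observation is that negation $\bs{x}\mapsto-\bs{x}$ is right multiplication by the central scalar $-E_n=3E_n$, and that $-E_n$ lies in $K$: since $f(2,0,\dots,0)=2P^0=2E_n$, we have $-E_n=E_n+2E_n=E_n+f(2,0,\dots,0)\in K$, and this element lies in $K_0\le G$ exactly when $(2,0,\dots,0)\in W$. If $(2,0,\dots,0)\in W$, then $-E_n\in G$, so $-S_1=S_1\cdot(-E_n)=S_1$ because $S_1$ is $G$-invariant. If $(2,0,\dots,0)\notin W$, I must first check $-E_n\notin G$ altogether: writing a hypothetical $-E_n=P^i(E_n+f(\bs{w}))$ with $\bs{w}\in W$ and reducing modulo $2$ gives $\ol{P}^i=E_n$ (as $\ol{f}(\ol{\bs{w}})=\ol{f}(0)=0$ and $\ol{-E_n}=E_n$), whence $i=0$ since $\ol{P}$ has order $2^n-1$; then injectivity of $f$ forces $\bs{w}=(2,0,\dots,0)\notin W$, a contradiction. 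Thus $-E_n$ lies in the nontrivial coset, and since right multiplication by the central $-E_n$ sends the left coset $G$ to $(-E_n)G\ne G$, it interchanges $S_1$ and $S_2$, giving $-S_1=S_2$.

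The main obstacle is the bookkeeping of this last paragraph: identifying $-E_n$ precisely as the element $E_n+f(2,0,\dots,0)$ of $K$, and then correctly translating ``negation'' into coset language so that membership versus non-membership of $-E_n$ in $G$ yields $-S_1=S_1$ versus $-S_1=S_2$ cleanly. Everything else follows directly from Lemma \ref{lemC}, the index-$2$ containment $G\le HK$, and the injectivity of $f$.
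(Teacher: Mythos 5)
Your proof is correct and takes essentially the same approach as the paper: the paper likewise gets the orbit count from Lemma \ref{lemC} (the regular $HK$-action on $V_2^\sharp$ together with the index-$2$ containment $G\le HK$ splits $V_2^\sharp$ into two $G$-orbits of length $|G|$), and decides the dichotomy by the identity $(1,0,\dots,0)(E_n+f(2,0,\dots,0))=-(1,0,\dots,0)$, i.e., by whether $-E_n=E_n+f(2,0,\dots,0)$ lies in $G$. Your coset bookkeeping and the explicit mod-$2$ check that $-E_n\notin G$ when $(2,0,\dots,0)\notin W$ simply spell out steps the paper leaves implicit via regularity.
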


\begin{proof}
  The first statement is clear by definition and Lemma \ref{lemC},
  where $S_0=V_0$, $S_1\cup S_2=V_2^\sharp$, and $S_3=V_1^\sharp$.

  Since $-S_1$ is also a $G$-orbit, $-S_1=S_1$ or $S_2$.
  Assume that $(1,0,\dots,0)\in S_1$.
  By $(1,0,\dots,0)(E_n+f(2,0,\dots,0))=-(1,0,\dots,0)$ and the regularity of the action
  of $HK$ on $V_2^\sharp$, the second statement holds.
  The similar thing holds for the case $(1,0,\dots,0)\in S_2$.
\end{proof}

Our group $G$ is a subgroup of $\GL_n(\Z_4)$,
the automorphism group of ${\Z_4}^n$.
The partition ${\Z_4}^n=S_0\cup S_1\cup S_2\cup S_3$ defines a Schur ring
and hence an association scheme $\mathfrak{X}$.
The association scheme $\mathfrak{X}$ is symmetric
if and only if $(2,0,\dots,0)\in W$.
We will determine the character table of $\mathfrak{X}$.

Since the association scheme $\mathfrak{X}$ is a fusion scheme of
an abelian group ${\Z_4}^n$,
every irreducible character of $\mathfrak{X}$ is a restriction
of an irreducible character of ${\Z_4}^n$.

\begin{prop}
  The character table of the association scheme $\mathfrak{X}$
  is $T_2$ if $\mathfrak{X}$ is symmetric,
  and $T_3$ if $\mathfrak{X}$ is non-symmetric.
  for $\theta=2^n-1$ in Section \ref{secBD}.
\end{prop}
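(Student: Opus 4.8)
The plan is to exploit the ring structure hidden in the construction. Since $1,P,\dots,P^{n-1}$ are $\Z_4$-independent and $\ol P=g(\zeta)$ has an irreducible minimal polynomial of degree $n$ over $\F_2$, the subalgebra $R=\Z_4[P]\subset\M_n(\Z_4)$ is a free $\Z_4$-module of rank $n$; it is in fact the Galois ring $\mathrm{GR}(4,n)$, a local ring with maximal ideal $\mathfrak m=2R$, with $\mathfrak m^2=0$ and residue field $R/\mathfrak m\cong\F_{2^n}$. The map $\Phi\colon R\to{\Z_4}^n$, $r\mapsto(1,0,\dots,0)\,r$, sends $1,P,\dots,P^{n-1}$ to the standard basis, hence is an additive isomorphism intertwining the right $G$-action with right multiplication in $R$. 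Under $\Phi$ I translate the four classes into ring-theoretic data: $S_0\leftrightarrow\{0\}$, $V_1\leftrightarrow\mathfrak m$ (so $S_3\leftrightarrow\mathfrak m\setminus\{0\}$), and $V_2^\sharp\leftrightarrow R^\times$. Moreover $H=\langle P\rangle$ is the Teichm\"uller group (mapping isomorphically onto $\F_{2^n}^\times$ under reduction), $K$ is the principal units $1+\mathfrak m$, and the Teichm\"uller decomposition $R^\times=H\times(1+\mathfrak m)$ gives $S_1=G=H\times(1+W')$ and $S_2=tG$ for a unit $t\notin G$, where $W'=\{f(\bs w)\mid\bs w\in W\}$ is an index-$2$ additive subgroup of $\mathfrak m$.

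Next I use that every $\chi\in\Irr(\mathfrak X)$ is a restriction of some $\varphi\in\Irr({\Z_4}^n)$, with $\chi(A_k)=\sum_{r\in S_k}\varphi(r)$, and I parametrize $\varphi=\varphi_u$ ($u\in R$) through the trace pairing $\varphi_u(r)=(\sqrt{-1})^{\,\mathrm{Tr}(ur)}$, whose reduction is the nondegenerate $\F_2$-trace form. Two easy sums come first. Restricting to $V_1\cong\mathfrak m$ and noting $\varphi_u|_{\mathfrak m}$ is trivial exactly when $u\in\mathfrak m$, I obtain $\chi(A_3)=\sum_{r\in\mathfrak m\setminus\{0\}}\varphi_u(r)$ equal to $2^n-1$ if $u\in\mathfrak m$ and $-1$ otherwise; summing over all of $R$ and subtracting then gives $\chi(A_1)+\chi(A_2)=\sum_{r\in R^\times}\varphi_u(r)$ equal to $2^n(2^n-1)$, $-2^n$, or $0$ according as $u=0$, $u\in\mathfrak m\setminus\{0\}$, or $u\in R^\times$. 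This fixes the trivial row and the $\chi_3$-row $(1,-2^{n-1},-2^{n-1},2^n-1)$ (equality of the two middle entries following from a short computation using $u\mathfrak m\subset\mathfrak m$ and $\mathfrak m^2=0$), and forces $\chi(A_1)=-\chi(A_2)$ on the unit characters.

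The heart of the argument, and the step I expect to be the main obstacle, is the evaluation of $\chi(A_1)=\sum_{r\in G}\varphi_u(r)$ for a unit $u$. Writing $G=H\times(1+W')$ and using additivity of $\varphi_u$, this factors as $\sum_{h\in H}\varphi_u(h)\bigl(\sum_{w'\in W'}\varphi_u(hw')\bigr)$. Identifying $\mathfrak m$ with $\F_{2^n}$ via $2r\mapsto\ol r$ turns the inner sum into a hyperplane character sum $\sum_{y\in\ol W}(-1)^{\mathrm{tr}(\ol{uh}\,y)}$, which equals $2^{n-1}$ when $\ol{uh}$ lies in the one-dimensional annihilator $\ol W^{\perp}=\{0,\omega\}$ and $0$ otherwise. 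Since $\ol{uh}\ne0$ and $h\mapsto\ol h$ is a bijection $H\to\F_{2^n}^\times$, exactly one $h_0\in H$ (that with $\ol{h_0}=\omega/\ol u$) survives, giving $\chi(A_1)=2^{n-1}\varphi_u(h_0)=2^{n-1}(\sqrt{-1})^{\,\mathrm{Tr}(uh_0)}$. The delicate point is to control this fourth-root-of-unity phase: its parity is governed by $\ol{\mathrm{Tr}(uh_0)}=\mathrm{tr}(\ol{uh_0})=\mathrm{tr}(\omega)$, so the phase is real precisely when $\mathrm{tr}(\omega)=0$ and purely imaginary precisely when $\mathrm{tr}(\omega)=1$.

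Finally I close the loop with the symmetry criterion. Because $\ol W=\omega^{\perp}$, we have $1\in\ol W\iff\mathrm{tr}(\omega)=0$, and under $\Phi$ the condition $1\in\ol W$ is exactly $(2,0,\dots,0)\in W$, i.e. $\mathfrak X$ symmetric. Hence in the symmetric case $\chi(A_1)\in\{2^{n-1},-2^{n-1}\}$ and in the non-symmetric case $\chi(A_1)\in\{2^{n-1}\sqrt{-1},-2^{n-1}\sqrt{-1}\}$; since the two unit characters $\chi_1,\chi_2$ are distinct and the scheme has rank $4$ (so the four characters found exhaust $\Irr(\mathfrak X)$), the two admissible phases must both occur, with opposite sign. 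Assembling the four rows for $\theta=2^n-1$ then reproduces $T_2$ when $\mathfrak X$ is symmetric and $T_3$ when it is not. The only genuine work is the Gauss-sum evaluation over the Teichm\"uller subgroup together with the orthocomplement bookkeeping tying the phase to the membership $(2,0,\dots,0)\in W$.
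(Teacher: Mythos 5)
Your proof is correct, and its combinatorial heart coincides with the paper's, but you reach the two delicate points by a genuinely different route. The paper works barehanded in ${\Z_4}^n$: it factors $\varphi(A_1)$ as $\sum_{i}\bigl(\sum_{\bs{w}\in W}\varphi(\bs{w}P^i)\bigr)\varphi(\bs{e}P^i)$, uses that $\langle\overline{P}\rangle$ permutes the index-$2$ subgroups of ${\Z_2}^n$ regularly to isolate the unique surviving index $i$ (your unique $h_0$ with $\overline{uh_0}=\omega$ is literally the same step in different clothing), then produces the second sign by twisting with a character $\psi$ trivial on $V_1$ with $\psi(\bs{e}P^i)=-1$, decides real versus purely imaginary by appealing to the symmetry dichotomy (a symmetric scheme forces $\chi(A_1)$ rational, a non-symmetric one forces it irrational), and obtains the $\chi_3$ row from the orthogonality relations. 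You instead transplant the whole configuration into the Galois ring $R=\mathrm{GR}(4,n)$, with $S_1=G=H\times(1+W')$ and characters given by the trace pairing, evaluate the Gauss-type sum over the Teichm\"uller group, and---this is the genuine difference---\emph{compute} the phase rather than infer it: $\chi(A_1)$ is real iff $\mathrm{tr}(\omega)=0$ iff $1\in\overline{W}$ iff $(2,0,\dots,0)\in W$, so your argument re-derives, instead of consuming, the agreement between the symmetry criterion following Lemma \ref{lemD} and the rational/irrational dichotomy; likewise you realize both signs by exhaustion (rank $4$ means the four distinct restrictions must all occur) instead of the explicit $\psi$-twist, and you get the $\chi_3$ row by direct summation rather than orthogonality. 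The paper's route buys brevity and needs no ring theory; yours buys the closed form $\chi(A_1)=2^{n-1}(\sqrt{-1})^{\mathrm{Tr}(uh_0)}$ and an independent check of the symmetric/non-symmetric split. Two points you should spell out in a full write-up: that $u\mapsto\varphi_u$ exhausts $\Irr({\Z_4}^n)$ (nondegeneracy of the $\Z_4$-valued trace pairing plus a counting argument, and freeness of $\Z_4[P]$ of rank $n$ so that $R\cong\mathrm{GR}(4,n)$), and that $\overline{\mathrm{Tr}(x)}=\mathrm{tr}(\overline{x})$ because the Frobenius commutes with reduction, which is exactly what makes the parity of the exponent equal $\mathrm{tr}(\omega)$ uniformly in $u$.
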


\begin{proof}
  First of all, there is a trivial character $\chi_0$ of $\mathfrak{X}$.

  Let $\varphi$ be a non-trivial character of ${\Z_4}^n$
  not containing $V_1$ in its kernel.
  By Lemma \ref{lemclosed}, $\varphi(A_3)=-1$.
  We can set
  $S_1=\bs{e}HK_0=\{\bs{e}(E_n+f(\bs{w}))P^i\mid 0\leq i<2^n-1,\ \bs{w}\in W\},$
  where $\bs{e}=(1,0,\dots,0)$.
  Remark that $\bs{e}f(\bs{w})=\bs{w}$.
  We have
  \begin{eqnarray*}
    \varphi(A_1)&=&\sum_{i=0}^{2^n-2}\sum_{\bs{w}\in W}\varphi(\bs{e}(E+M(\bs{w}))P^i
                = \sum_{i=0}^{2^n-2}\sum_{\bs{w}\in W}
                    \varphi(\bs{e}P^i)\varphi(\bs{w}P^i)\\
                &=&\sum_{i=0}^{2^n-2}\left(\sum_{\bs{w}\in W}\varphi(\bs{w}P^i)\right)
                    \varphi(\bs{e}P^i).
  \end{eqnarray*}
  The group $\langle\ol{P}\rangle$ permutes
  ${\Z_2}^n\setminus\{(0,\dots,0)\}$ regularly,
  it permutes the set of all subgroups of ${\Z_2}^n$ of index $2$ regularly.
  Since $|\Ker \varphi\  \cap\  V_1|=2^{n-1}$,
  there is a unique $i$ such that $WP^i\subset \Ker\varphi$.
  Now $\sum_{\bs{w}\in W}\varphi(\bs{w}P^i)=2^{n-1}$ and
  $\sum_{\bs{w}\in W}\varphi(\bs{w}P^{i'})=0$ for $i'\ne i$.
  Thus $\varphi(A_1)=2^{n-1}\varphi(\bs{e}P^i)\in \{\pm 2^{n-1},\ \pm 2^{n-1}\sqrt{-1}\}$.
  There exists $\psi\in\Irr({\Z_4}^n)$ such that $\Ker\psi\supset V_1$ and
  $\psi(\bs{e}P^i)=-1$.
  Since $\psi\varphi\in\Irr({\Z_4}^n)$, $\Ker(\psi\varphi)\supset V_1$,
  and $(\psi\varphi)(\bs{e}P^i)=-\varphi(\bs{e}P^i)$,
  we may suppose that $\varphi(A_1)\in \{2^{n-1}, 2^{n-1}\sqrt{-1}\}$.
  By $\sum_{j=0}^4\varphi(A_j)=0$, $\varphi(A_2)=-\varphi(A_1)$.
  Now we determine $\chi_1,\chi_2\in\Irr(\mathfrak{X})$ by $\varphi,\psi\varphi$,
  respectively.

  If $\mathfrak{X}$ is symmetric, then $\chi_2(A_1)$ must be rational,
  and so $\chi_2(A_1)=2^{n-1}$.
  If $\mathfrak{X}$ is non-symmetric, then $\chi_2(A_1)$ must be irrational,
  and so $\chi_2(A_1)=2^{n-1}\sqrt{-1}$.

  The remaining irreducible character $\chi_3$ is determined
  by orthogonality relation.
  The character table is completely determined for each case.
\end{proof}

\begin{thm}\label{thm2}
  The association scheme $\mathfrak{X}=(X, \{R_i\}_{i=0,\dots,3})$
  defined above is self-dual and
  $R_0\cup R_3$, $R_1$, and $R_2$ define partial geometric designs.
\end{thm}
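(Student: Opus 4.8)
The plan is to deduce this theorem directly from the character-table computation carried out in the preceding proposition, together with the converse direction of Xu's characterization (Theorem \ref{thmXu}). First I would invoke the preceding proposition, which shows that the character table of $\mathfrak{X}$ equals $T_2$ when $\mathfrak{X}$ is symmetric and $T_3$ when $\mathfrak{X}$ is non-symmetric, in both cases with $\theta=2^n-1$. Since $2^n-1$ is odd, the parameter $\theta$ satisfies the oddness hypothesis required in Theorem \ref{thmXu}, so the tables obtained are genuinely of the form $T_2$ and $T_3$ appearing there, and not merely formally similar.

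Next I would apply the final sentence of Theorem \ref{thmXu}: any association scheme whose character table is $T_2$ or $T_3$ is automatically self-dual and has the property that $R_1$, $R_2$, and $R_0\cup R_3$ induce partial geometric designs. Since self-duality and the partial-geometric-design property depend only on the character table (self-duality is defined purely in terms of the entries $\chi_i(A_j)/n_j$, and Xu has already verified the design conditions for these tables), combining this converse with the identification of the character table of $\mathfrak{X}$ completes the argument. This mirrors the way Theorem \ref{thm1} follows from Proposition \ref{prop3A} in the previous section.

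The only point that requires genuine care is matching the two cases correctly. One must confirm that the symmetric/non-symmetric dichotomy recorded earlier for this construction---namely that $\mathfrak{X}$ is symmetric precisely when $(2,0,\dots,0)\in W$, as established via Lemma \ref{lemD}---lines up exactly with the assignment of $T_2$ to the symmetric case and $T_3$ to the non-symmetric case made in the preceding proposition. In particular, in the non-symmetric case one should check that the relation $R_1$ is the one carrying the eigenvalue $2^{n-1}\sqrt{-1}$ rather than $2^{n-1}$, so that the rows and columns are ordered consistently with $T_3$; otherwise the table would agree with $T_3$ only after a permutation, which is harmless for the conclusion but worth noting.

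I do not expect any substantive obstacle here, since all the hard work has already been done upstream: the explicit evaluation of the eigenvalues through the kernels of the characters $\varphi$ of ${\Z_4}^n$ in the preceding proposition, and Xu's converse direction in Theorem \ref{thmXu}. The theorem is therefore essentially a corollary, and the proof should be short.
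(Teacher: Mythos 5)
Your proposal is correct and matches the paper's own argument, which simply notes that ``the claim holds because the properties are determined by the character table''---i.e., it combines the preceding proposition with the converse direction of Theorem \ref{thmXu}, exactly as you do. Your extra checks (oddness of $\theta=2^n-1$ and consistent row/column ordering for $T_3$) are harmless elaborations of the same one-line proof.
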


\begin{proof}
  The claim holds because the properties are determined by the character table.
\end{proof}

\begin{rem}
  The isomorphism class of $\mathfrak{X}$ depends on the choice of $W$.
  In Example \ref{ex3},
  the order of the automorphism groups of $\mathfrak{X}$
  are $5376$ for $W=\F_4(0,2,0)+\F_4(0,0,2)$
  and $1792$ for $W=\F_4(0,2,0)+\F_4(2,0,2)$.
\end{rem}

\bibliographystyle{amsplain}

\begin{thebibliography}{1}

\bibitem{Bannai1993}
E.~Bannai, \emph{Association schemes and fusion algebras (an introduction)}, J.
  Algebraic Combin. \textbf{2} (1993), no.~4, 327--344.

\bibitem{BI}
E.~Bannai and T.~Ito, \emph{Algebraic combinatorics. {I}}, The
  Benjamin/Cummings Publishing Co. Inc., Menlo Park, CA, 1984.

\bibitem{MR0505661}
R.~C. Bose, S.~S. Shrikhande, and N.~M. Singhi, \emph{Edge regular multigraphs
  and partial geometric designs with an application to the embedding of
  quasi-residual designs}, Colloquio {I}nternazionale sulle {T}eorie
  {C}ombinatorie ({R}oma, 1973), {T}omo {I}, Atti dei Convegni Lincei, No. 17,
  Accad. Naz. Lincei, Rome, 1976, pp.~49--81. 

\bibitem{Hanaki-Hirai-Ponomarenko2022}
A.~Hanaki, T.~Hirai, and I.~Ponomarenko, \emph{On a {H}uge {F}amily of
  {N}on-{S}churian {S}chur {R}ings}, Electron. J. Combin. \textbf{29} (2022),
  no.~2, Paper No. 2.14--. 

\bibitem{MR570206}
A.~Neumaier, \emph{{$t{1\over 2}$}-designs}, J. Combin. Theory Ser. A
  \textbf{28} (1980), no.~3, 226--248. 

\bibitem{MR3570803}
K.~Nowak, O.~Olmez, and S.~Y. Song, \emph{A family of partial geometric designs
  from three-class association schemes}, J. Combin. Des. \textbf{24} (2016),
  no.~12, 533--552. 

\bibitem{MR0183775}
H.~Wielandt, \emph{Finite permutation groups}, Academic Press, New York-London,
  1964, Translated from the German by R. Bercov. 

\bibitem{MR4522422}
B.~Xu, \emph{Partial geometric designs arising from association schemes},
  European J. Combin. \textbf{109} (2023), Paper No. 103658, 16. 

\bibitem{Xu2023}
\bysame, \emph{Self-dual association schemes, fusions of {H}amming schemes, and
  partial geometric designs}, J. Combin. Des. (2023).

\end{thebibliography}
\providecommand{\bysame}{\leavevmode\hbox to3em{\hrulefill}\thinspace}
\providecommand{\MR}{\relax\ifhmode\unskip\space\fi MR }
\providecommand{\MRhref}[2]{%
  \href{http://www.ams.org/mathscinet-getitem?mr=#1}{#2}
}
\providecommand{\href}[2]{#2}

\end{document}